\DeclareMathAlphabet{\mathcal}{OMS}{zplm}{m}{n}
\def\th@plain{%
  \thm@notefont{}
  \itshape 
}
\def\th@definition{%
  \thm@notefont{}
  \normalfont 
}
\newcommand*{\email}[1]{\href{mailto:#1}{\nolinkurl{#1}} } 
\newif\ifrelease
\newcommand{\ropt}{r_{\rm opt}}
\DeclareMathOperator{\rank}{rank}
\newcommand{\twobytwo}[4]{\begin{bmatrix} #1 & #2 \\ #3 & #4 \end{bmatrix}}
\newcommand{\twobyone}[2]{\begin{bmatrix} #1 \\ #2 \end{bmatrix}}
\newcommand{\onebytwo}[2]{\begin{bmatrix} #1 & #2 \end{bmatrix}}
\newcommand{\threebyone}[3]{\begin{bmatrix} #1 \\ #2 \\ #3 \end{bmatrix}}
\newcommand{\onebythree}[3]{\begin{bmatrix} #1 & #2 & #3 \end{bmatrix}}
\newcommand{\threebythree}[9]{\begin{bmatrix} #1 & #2 & #3 \\ #4 & #5 & #6\\
  #7 & #8 & #9\end{bmatrix}}
\DeclareMathOperator{\Row}{Row}
\DeclareMathOperator{\Col}{Col}
\newcommand{\set}[1]{\mathcal{#1}}
\newcommand{\field}{\mathbb{K}}
\newcommand{\indexmat}[3]{#1(#2,#3)} 
\newcommand{\rinv}{\mathsf{R}}
\newcommand{\linv}{\mathsf{L}}
\DeclareMathOperator*{\argmin}{argmin}
\declaretheorem[name=Theorem]{theorem}
\declaretheorem[name=Proposition,numberlike=theorem]{proposition}
\declaretheorem[name=Lemma,numberlike=theorem]{lemma}
\declaretheorem[name=Corollary,numberlike=theorem]{corollary}
\theoremstyle{remark}
\theoremstyle{definition}
\newtheorem{construction}{Construction}
\title{Minimal Rank Completions for Overlapping Blocks}
\author[1,*]{Ethan N. Epperly}
\author[2]{Nithin Govindarajan}
\author[3]{Shivkumar Chandrasekaran}
\affil[1]{Division of Computing and Mathematical Sciences, California
Institute of Technology}
\affil[2]{Department of Electrical Engineering (ESAT), KU Leuven}
\affil[3]{Department of Electrical Engineering and Computer Engineering, University of
California, Santa Barbara}
\affil[*]{\textnormal{Corresponding author. Email: \email{eepperly@caltech.edu}}}
\date{\today}
\begin{document}

\maketitle

\begin{abstract}
  We consider the multi-objective optimization problem of choosing the bottom left block-entry of a block lower triangular matrix to minimize the ranks of all block
  sub-matrices. We provide a proof that there exists a simultaneous rank-minimizer by constructing the complete set of all minimizers.
\end{abstract}

\let\thefootnote\relax\footnotetext{\textbf{Keywords:} Matrix Completion, Low-rank Structure, Minimal Rank Completion}

\let\thefootnote\relax\footnotetext{\textbf{AMS Subject Classifications:} 15A83, 15A29, 65F55}

\section{Introduction}

This article considers the following problem: given a block-triangular array of the form
\begin{equation} \label{eq:lower_triangular_array}
    \begin{bmatrix}
    A_{11} \\
    A_{21} & A_{22} \\
    \vdots & \vdots & \ddots \\
    X & A_{n2} & \cdots & A_{nn}
    \end{bmatrix}
\end{equation}
can the $(n,1)$-block $X$ be chosen to simultaneously minimize
the ranks of the subblocks
\begin{equation}\label{eq:hankel_blocks}
    \begin{bmatrix} A_{11} \\ A_{21} \\ \vdots \\
    X\end{bmatrix}, \begin{bmatrix}
    A_{21} & A_{22} \\ \vdots & \vdots \\ X & A_{n2}
    \end{bmatrix},\ldots,
    \begin{bmatrix}
    X & A_{n2} & \cdots & A_{nn}
    \end{bmatrix}?
\end{equation}
The main theorem of this paper answers this question in the affirmative.

\begin{theorem}\label{thm:main_theorem}
  There exists an $X$ simultaneously minimizing the ranks of each of the matrices in \eqref{eq:hankel_blocks}.
\end{theorem}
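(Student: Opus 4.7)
For each $k$, I would reinterpret $H_k$ (the $k$-th matrix in \eqref{eq:hankel_blocks}) as the $2\times 2$ block matrix
$$
H_k \;=\; \begin{bmatrix} L_k & C_k \\ X & B_k \end{bmatrix},
$$
where $L_k$ is the column of blocks $A_{k,1}, A_{k+1,1}, \ldots, A_{n-1,1}$ lying above $X$, $B_k$ is the row of blocks $A_{n,2}, \ldots, A_{n,k}$ lying to the right of $X$, and $C_k$ is the $(n-k)\times(k-1)$-block ``upper-right'' corner of $H_k$ (with the natural conventions that $L_n$, $B_1$, and $C_1$, $C_n$ are empty). The classical Kaashoek--Woerdeman minimal-rank completion theorem for a single unknown block then yields
$$
r_k \;:=\; \min_X \rank H_k \;=\; \rank \begin{bmatrix} L_k & C_k \end{bmatrix} \;+\; \rank \begin{bmatrix} C_k \\ B_k \end{bmatrix} \;-\; \rank C_k,
$$
and realizes the minimizer set $\mathcal{M}_k := \{X : \rank H_k = r_k\}$ as an affine subspace, explicitly parameterized by the generalized inverses of $C_k$. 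The theorem reduces to showing $\bigcap_{k=1}^n \mathcal{M}_k \neq \emptyset$.

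To exhibit an $X$ in the intersection, I would adapt the two-block Schur-complement formula $X = B_k C_k^{(-)} L_k$ to the nested $n$-block situation. The plan is to work in bases aligned with the sliding row spaces of $\begin{bmatrix} L_k & C_k \end{bmatrix}$ and column spaces of $\begin{bmatrix} C_k \\ B_k \end{bmatrix}$, and then write down a single candidate $X^\ast$ as a product formula in these bases that, through an appropriate choice of generalized inverse for each $C_k$, simultaneously sits inside every $\mathcal{M}_k$. The heuristic plausibility of such a unified construction rests on the block-lower-triangular layout of the original array, which forces the sliding blocks $L_k$, $C_k$, $B_k$ to be nested inside one another in a structured way as $k$ increases.

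The main obstacle I anticipate is precisely the mutual consistency of these $n$ parameterizations. Each $\mathcal{M}_k$ admits a family of representatives (notably indexed by the choice of generalized inverse $C_k^{(-)}$), and as $k$ slides the valid choices are interlinked. The argument must identify, using only the lower-triangular pattern of the $A_{ij}$'s, a distinguished $X^\ast$ such that for every $k$ there is a valid generalized inverse of $C_k$ realizing $X^\ast = B_k C_k^{(-)} L_k$. I expect this canonical choice to emerge from a careful normalization simultaneously respecting all $n$ nested row and column spaces --- which is, essentially, the promised construction of the complete set of minimizers advertised in the abstract.
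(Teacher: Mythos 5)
Your setup is sound and, in fact, parallels the paper's strategy: each Hankel block is viewed as a block $2\times 2$ completion problem with the Kaashoek--Woerdeman lower bound $r_k$, and the theorem reduces to showing $\bigcap_k \mathcal{M}_k \neq \emptyset$. But the proposal stops exactly where the proof has to begin. The entire content of the theorem is the "mutual consistency" step that you defer to "a careful normalization" expected to "emerge"; no candidate $X^\ast$ is actually produced, and no argument is given that the $n$ affine spaces $\mathcal{M}_k$ intersect. Asserting that the nestedness of the blocks makes a unified construction "heuristically plausible" is precisely the point at which the authors themselves admit they initially expected trade-offs to be unavoidable, so plausibility cannot substitute for the construction.

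There is also a technical flaw in the proposed parameterization. The family $X = B_k C_k^{(-)} L_k$, as $C_k^{(-)}$ ranges over generalized inverses, does not describe the full solution set $\mathcal{M}_k$ in general -- it only does so under nondegeneracy conditions on $C_k$ of the type isolated in the paper's Unique Completion Lemma. For $k=1$ and $k=n$ the block $C_k$ is empty and the formula is vacuous, while $\mathcal{M}_1$ and $\mathcal{M}_n$ are large affine spaces (all $X$ with row space, respectively column space, contained in that of the known part). Restricting attention to Schur-complement-type representatives could therefore miss the minimizers that lie in every $\mathcal{M}_k$. What the paper actually does, and what your plan lacks, is: (i) build two \emph{nested} chains of index sets $\set{L}_0 \supseteq \cdots \supseteq \set{L}_n = \emptyset$ and $\emptyset = \set{K}_0 \subseteq \cdots \subseteq \set{K}_n$ adapted to all $n$ column and row spaces at once; (ii) use them to partition $X$ itself into an $n\times n$ block array whose strictly upper triangular blocks are free and whose remaining blocks are forced, one block row at a time, by a uniquely solvable $2\times 2$ subproblem; and (iii) verify by induction on $i$, using the row- and column-space inclusions guaranteed by the unique-completion step, that the single resulting $X$ saturates $r_k$ for every $k$. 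Without steps (i)--(iii), or some equivalent mechanism for exhibiting a common point of the $\mathcal{M}_k$, the proposal does not prove the theorem.
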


Our proof is constructive and can be implemented as an algorithm to recover all such $X$. 

\paragraph{Related work.} The problem of determining the minimal rank completion of a matrix with partially specified entries has received considerable interest
in the literature \cite{candes_matrix_2009}, and has seen applications in collaborative filtering \cite{goldberg_using_1992,rennie_fast_2005}, system identification
\cite{liu_interior-point_2009}, and remote sensing \cite{schmidt_multiple_1986}. For an arbitrary distribution of the missing entries, minimal rank completion problems
are NP hard \cite{buss_computational_1999}, but under certain conditions they can be solved exactly using semidefinite programming \cite{candes_matrix_2009}.

An alternate line of work \cite{kaashoek_unique_1988,woerdeman1989minimal,woerdeman_minimal_1993,eidelman2014separable} has shown that the entire solution set of a minimal
rank completion problem can be computed exactly using matrix factorizations provided the location of the missing entries have a structured pattern. For example, the
complete solution set of the minimal rank completion problem can be computed when the missing entries are arranged in a block triangular form
\cite{kaashoek_unique_1988,woerdeman1989minimal} or in a banded structure \cite{woerdeman_minimal_1993}. These methods may be more computationally tractable than
semidefinite programming-based approaches and they have the advantage that they provide the complete solution set. In particular, they can certify or dis-certify
uniqueness of the rank-minimizing choice. However, the requirement for the missing entries to satisfy a certain pattern can significantly limit the range of applicability
of these techniques.

We are unaware of any existing work on the overlapping block minimal rank completion problem considered in this paper. The closest related work we are aware of is the work of \cite{kaashoek_unique_1988, woerdeman1989minimal} on block triangular minimal rank completion problem, but this work considers the case when the \textit{missing entries} comprise a triangular matrix. By contrast, in our case only a single block is missing, but we are interested in minimizing the rank of not one, but a collection of blocks with overlapping entries. Simultaneous matrix completion problems over finite fields have been studied in theoretical computer science \cite{harvey_complexity_2006}, but the finite size of the field plays a critical role in this theory. By contrast, this work makes no assumptions on the cardinality of the field. 

\paragraph{Motivation and potential applications.} We encountered this type of minimal rank completion problem naturally in the construction of minimal representations for
rank-structured matrices \cite{CEG19}. Consider a matrix $M$ for which \eqref{eq:lower_triangular_array} denotes its strictly block lower triangular part.
The sequentially semiseparable (SSS) representation \cite{chandrasekaran2005some} of $M$ compresses the matrix $M$ by storing only low-rank approximations of each of the
overlapping \emph{Hankel blocks}
given by \eqref{eq:hankel_blocks}. Rather than storing these low-rank approximations independently, the SSS representation leverages the overlap of the blocks
to achieve further levels of compression. The strictly upper triangular part of $M$ is compressed similarly.

The SSS representation is effective at representing matrices with approximately low-rank off-diagonal blocks, but it can faulter for matrices possessing more complicated
types of low-rank structure. In an effort to develop more robust types of representations,
we considered in \cite{CEG19} a variant of SSS representation which we called the \emph{cycle
  semiseparable} (CSS) representation. This representation introduces the additional flexibility of perturbing the bottom left entry of $M$ before the lower triangular
array \eqref{eq:lower_triangular_array} is compressed using the SSS representation. Since the size of the SSS representation scales with the rank of the Hankel blocks
\eqref{eq:hankel_blocks}, it is advantageous to choose this corner perturbation to minimize the rank of each of the Hankel blocks. Initially, one
might (and we did) suspect that this problem would involve some trade-offs: minimizing the rank of one block might necessarily increase the rank of another.
Surprisingly,
Theorem~\ref{thm:main_theorem} shows that this is not the case: there exists a single choice minimizing the ranks of all blocks.
We speculate that the overlapping
minimal rank completion problem (or generalizations of it) may be of use in constructing minimal representations for other SSS-related formats and
may have other applications in the study of rank-structured matrices, such as determining the nearest SSS matrix with generators of rank $r$ to a given matrix.

\paragraph{Outline.} We begin with a ``warm-up'' by summarizing the solution to the block $2\times 2$ minimal rank completion problem, as derived in
\cite{kaashoek_unique_1988,woerdeman1989minimal}, in Section~\ref{sec:block_2x2}. We then extend this technique to handle our general problem in Section~\ref{sec:main_proof},
providing a proof of Theorem~\ref{thm:main_theorem}. Our construction completely characterizes the solution set, which is an affine space whose dimension
we will identify. We will discuss computational issues and conclude in Section~\ref{sec:discussion_and_conclusions}.

\paragraph{Notation.} Sets of row or column indices will be denoted by calligraphic letters and entries of a matrix $A$ occurring in rows indexed by $\set{I}$
and columns indexed by $\set{J}$ will be denoted as $A(\set{I},\set{J})$.
The set of rows of $A$ indexed by $\set{I}$ or columns indexed by
$\set{J}$ will be denoted $\indexmat{A}{\set{I}}{:}$
and $\indexmat{A}{:}{\set{J}}$, respectively. 

When notating a block matrix $X$ corresponding to, say, row indexes $\overline{\set{I}}$ and $\set{I}$
and column indexes $\overline{\set{J}}$ and $\set{J}$, we shall write
\begin{equation*}
    X = \kbordermatrix{
        & \overline{\set{J}}              & \set{J}              \\
        \set{I}                         & \indexmat{X}{\set{I}}{\overline{\set{J}}}  & \indexmat{X}{\set{I}}{\set{J}} \\
        \overline{\set{I}} & \indexmat{X}{\overline{\set{I}}}{\overline{\set{J}}} &
        \indexmat{X}{\overline{\set{I}}}{\set{J}}
      }.
\end{equation*}
This expression is purely a notation: we are not implying that the rows in $\set{I}$ precede
those in $\overline{\set{I}}$ in the natural ordering. Rather, we encourage the reader to think
of the matrix as partitioned in this way, despite any complicated interleavings between the 
rows and columns indexed by these sets.

The row and column spaces of a matrix $M$ will be denoted $\Row M$ and $\Col M$ respectively. Matrices will be permitted to have zero rows or columns.
Our main result will hold for matrices taking values in an arbitrary
field which we will denote $\field$. Readers unconcerned with such levels of generality are free to consider the field $\field$ to consist of the real
or complex numbers.

\section{The Block \texorpdfstring{$2\times 2$}{Two-by-two} Minimal Rank Completion
Problem}\label{sec:block_2x2}

As a warm-up, let us consider the block $2\times 2$ minimal rank completion problem:
\begin{equation}
  \mbox{find } X \mbox{ such that } \rank \twobytwo{B}{C}{X}{D} \mbox{ is minimal}, \label{eq:2x2_lrcp}
\end{equation}
where $B$, $C$, and $D$ are provided matrices of the appropriate size. 
We shall summarize the complete solution to this problem derived originally, in a more general setting, by Kaashoek and Woerdeman \cite{kaashoek_unique_1988,woerdeman1989minimal}.
An alternate derivation is given by \cite{eidelman2014separable}, though they do not characterize the
complete solution set (nor do they claim to). Our presentation in this section
will be brief and mainly serves to set up tools and foreshadow techniques needed
to prove Theorem~\ref{thm:main_theorem}.

We have the following lower bound for this minimal rank completion problem, originally due to \cite{woerdeman1987lower}.

\begin{proposition}
  For any matrix $X$ of the appropriate size,
  \begin{equation} \label{eq:2x2_lower_bound}
    \rank \twobytwo{B}{C}{X}{D} \ge \rank \onebytwo{B}{C} + \rank \twobyone{C}{D} - \rank C =: \ropt
  \end{equation}
\end{proposition}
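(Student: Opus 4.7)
The plan is to exploit a submodularity property of row spaces under projection. Let $M = \twobytwo{B}{C}{X}{D}$ denote the full block matrix, and let $\pi\colon \field^{n_1+n_2} \to \field^{n_2}$ be the projection onto the last $n_2$ coordinates, which extracts the ``right half'' of any row vector. Write $U = \Row M$ and $T = \Row \onebytwo{B}{C}$, noting that $T \subseteq U$ because the rows of $\onebytwo{B}{C}$ literally appear among the rows of $M$. The entire argument will amount to two applications of rank--nullity, one to $U$ and one to $T$, followed by a monotonicity observation comparing their kernels.

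First, I would apply rank--nullity to the restriction $\pi|_U$. Its image is exactly $\Row \twobyone{C}{D}$, since the right half of any row of $M$ is a row of $\twobyone{C}{D}$ and vice versa. This gives
\begin{equation*}
  \rank M = \rank \twobyone{C}{D} + \dim \ker(\pi|_U).
\end{equation*}
Applying the same identity to $T$, whose image under $\pi$ is $\Row C$, yields
\begin{equation*}
  \rank \onebytwo{B}{C} = \rank C + \dim \ker(\pi|_T).
\end{equation*}

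The concluding step is the trivial monotonicity $\ker(\pi|_T) \subseteq \ker(\pi|_U)$, inherited directly from $T \subseteq U$. Subtracting the second identity from the first (after using the inequality on the kernels) produces
\begin{equation*}
  \rank M - \rank \onebytwo{B}{C} \geq \rank \twobyone{C}{D} - \rank C,
\end{equation*}
which rearranges to exactly $\rank M \geq \ropt$. There is no real obstacle here; the bound is essentially Frobenius's rank inequality applied to the factorization $C = \onebytwo{I_{m_1}}{0}\, M\, \twobyone{0}{I_{n_2}}$, but the direct submodularity proof above is more self-contained and foreshadows the row-space bookkeeping that will be needed for the harder multi-block case in Section~\ref{sec:main_proof}.
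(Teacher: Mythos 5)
Your proof is correct and is essentially the paper's argument in a different dress: the paper's one-line observation that $\onebytwo{X}{D}$ must contribute at least $\rank \twobyone{C}{D} - \rank C$ rows linearly independent of $\Row \onebytwo{B}{C}$ is precisely your kernel-monotonicity step for the coordinate projection $\pi$, phrased in terms of images rather than kernels. No gaps; nothing further is needed.
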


The proof is quite straightforward and basically resolves to noting that, regardless of the choice of $X$, $\onebytwo{X}{D}$ contains
at least $\rank \twobyone{C}{D} - \rank C$ rows linearly independent of $\onebytwo{B}{C}$. In fact, this lower bound is achieved, as
we shall soon show.
Let us first start off with a special case, for which there is a unique solution furnished by the following constructive lemma. This is a particular special case of the full characterization of unique completions for the $2\times 2$ block minimal rank completion problem \eqref{eq:2x2_lrcp} from \cite{kaashoek_unique_1988}.

\begin{lemma}[Unique Completion Lemma] \label{lem:ucl}
  Consider an instance of the block $2\times 2$ minimal rank completion problem \eqref{eq:2x2_lrcp} for which can be block-partitioned as 
  \begin{equation*}
    \twobytwo{B}{C}{X}{D} = \left[\begin{array}{c|cc} B_{1} & C_{11} & C_{12} \\
    B_2 & C_{21} & C_{22} \\\hline 
    X & D_1 & D_2
    \end{array}\right]
  \end{equation*}
  where
  \begin{enumerate}[label=(\arabic*)]
  \item $\Col \begin{bmatrix} B_1 & C_{11} & C_{12} \end{bmatrix} = \Col \begin{bmatrix} C_{11} & C_{12} \end{bmatrix}$,
  \item $\Row \begin{bmatrix} C_{12} \\ C_{22} \\ D_2 \end{bmatrix} = \Row \twobyone{C_{12}}{C_{22}}$,
  \item $\Col C_{11} \cap \Col C_{12} = \{0\}$, 
  \item $\Row C_{22} \cap \Row C_{12} = \{0\}$,
  \item $C_{11}$ has full column rank, and
  \item $C_{22}$ has full row rank. 
  \end{enumerate}
  Then \eqref{eq:2x2_lrcp} has a unique solution $X$. Moreover, this solution satisfies the properties
  \begin{enumerate}[label=(\Roman*)]
  \item $\Row \onebytwo{X}{D} \subseteq \Row \onebytwo{B}{C}$, \label{item:ucl_row}
  \item $\Col \twobyone{B}{X}\subseteq \Col \twobyone{C}{D}$, and \label{item:ucl_col}
  \item $X$ has an affine dependence on $C_{21}$, $B_2$, and $D_1$. \label{item:ucl_affine}
  \end{enumerate}
\end{lemma}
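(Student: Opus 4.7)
The strategy is to construct a candidate $X$ explicitly, verify it satisfies (I)--(III), and show that these properties entail both minimality and uniqueness. The structural observation driving the argument is that hypotheses (1)--(6) force $\Col B \subseteq \Col C$ and $\Row D \subseteq \Row C$; this collapses the Woerdeman lower bound to $\ropt = \rank C$ and makes each of (I) and (II) equivalent to achievability of this minimum.

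Concretely, I would first invoke (1) to decompose $B_1 = C_{11} E_1 + C_{12} E_2$; condition (3) makes the split into $\Col C_{11}$- and $\Col C_{12}$-parts unique, and (5) then pins down $E_1$ uniquely, with $E_2$ only defined modulo $\operatorname{Null}(C_{12})$. Symmetrically, (2), (4), (6) yield $D_2 = F_1 C_{12} + F_2 C_{22}$ with $F_2$ unique. I would then define
\[
    X := D_1 E_1 + D_2 E_2 + F_2\bigl(B_2 - C_{21} E_1 - C_{22} E_2\bigr),
\]
and check this is independent of the choice of $E_2$: replacing $E_2$ by $E_2 + U$ with $C_{12} U = 0$ shifts the right-hand side by $D_2 U - F_2 C_{22} U = F_1 C_{12} U = 0$. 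Property (III) is then manifest from the formula, since $E_1, E_2$ depend only on $B_1, C_{11}, C_{12}$ and $F_2$ only on $D_2, C_{12}, C_{22}$, leaving $C_{21}, B_2, D_1$ to enter affinely.

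The main obstacle is verifying (I) and (II) for this $X$. For (II), I would exhibit $Z = \twobyone{Z_1}{Z_2}$ with $\twobyone{B}{X} = \twobyone{C}{D} Z$ by taking $Z_1 := E_1$ and finding $Z_2$ that simultaneously solves $C_{12} Z_2 = C_{12} E_2$ and $C_{22} Z_2 = B_2 - C_{21} E_1$; a dimension count using (4) and (6) shows that $C_{22}$ restricted to $\operatorname{Null}(C_{12})$ is surjective, which gives existence of $Z_2$. A dual argument handles (I): under (3) and (5), the right null space of $\onebytwo{C_{11}}{C_{12}}$ equals $\{\twobyone{0}{v} : C_{12} v = 0\}$, so a row $(\sigma_1, \sigma_2)$ belongs to $\Row\onebytwo{C_{11}}{C_{12}}$ precisely when $\sigma_2 \in \Row C_{12}$; applying this row-by-row to $(D_1 - F_2 C_{21}, D_2 - F_2 C_{22})$ and using that $D_2 - F_2 C_{22} = F_1 C_{12}$ produces the matrix $Y$ witnessing (I). Once (I) is established, $\rank\twobytwo{B}{C}{X}{D} = \rank\onebytwo{B}{C} = \rank C = \ropt$ is immediate; uniqueness then follows because any other minimizer $X'$ must also satisfy $\onebytwo{X'}{D} = Y'\onebytwo{B}{C}$ for some $Y'$, whence $X' = Y'B$ is determined by $Y'C = D$ (using $\Col B \subseteq \Col C$, which puts the left null space of $C$ inside that of $B$).
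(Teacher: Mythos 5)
Your proof is correct, and it arrives at the same object as the paper's --- morally $X = DC^{-1}B$, realized in your case as a generalized-inverse expression through the decompositions $B_1 = C_{11}E_1 + C_{12}E_2$ and $D_2 = F_1C_{12} + F_2C_{22}$ --- but by a genuinely different route. The paper first reduces, without loss of generality, to the case where $\onebythree{B_1}{C_{11}}{C_{12}}$ has full row rank and $\threebyone{C_{12}}{C_{22}}{D_2}$ has full column rank; hypotheses (1)--(6) then force $C$ to be square and invertible, minimality and uniqueness follow in one line from the block row operation sending $\onebytwo{X}{D}$ to $\onebytwo{X - DC^{-1}B}{0}$, and property (III) is extracted from an explicit block formula for $C^{-1}$ (the paper's Appendix A). You skip the reduction entirely and work with the original, possibly rank-deficient data. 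That costs you the two existence arguments for $Z_2$ and $Y_1$, both of which rest on the same fact --- that $C_{22}$ restricted to $\operatorname{Null}(C_{12})$ is surjective, by (4) and (6) --- but it buys you two things: the step the paper only sketches (``one can confirm that this modification preserves the solution set, as well as the hypotheses'') never has to be checked, and (III) is manifest from your defining formula rather than assembled from the blocks of $C^{-1}$. Your uniqueness argument (any minimizer forces $\onebytwo{X'}{D} = Y'\onebytwo{B}{C}$, and $Y' \mapsto Y'B$ is constant on the solution set of $Y'C = D$ because the left null space of $C$ lies in that of $B$) is also a clean substitute for the paper's invertibility argument. Two pieces of bookkeeping remain to be written out, though neither is a gap: after exhibiting $Z$ and $Y$ you must still verify $DZ = X$ and $YB = X$ (both follow by substituting the decompositions of $D_2$ and $B_1$ and using $C_{12}V = 0$, respectively $Y_1C_{12} = F_1C_{12}$), and the identity $\ropt = \rank C$ needs $\Row D \subseteq \Row C$ as well as $\Col B \subseteq \Col C$, the former being the exact dual of your $Z_2$ argument.
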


\begin{proof}
  Without loss of generality, we are free to assume that $\onebythree{B_1}{C_{11}}{C_{12}}$
  and $\threebyone{C_{12}}{C_{22}}{D_2}$ have full row and column rank, respectively,
  by replacing each with a maximal linearly independent subcollection of rows or columns. One can confirm that this modification
  preserves the solution set of \eqref{eq:2x2_lrcp}, as well as the
  hypotheses.

  By the first two hypotheses, we thus have that $\onebytwo{C_{11}}{C_{12}}$ and $\twobyone{C_{12}}{C_{22}}$ have full row and column rank
  respectively. But then by the next four hypotheses, we have that $C$ has linearly
  independent columns and rows respectively, so $C$ is square and invertible.
  Then $X = DC^{-1}B$ is the unique rank-minimizing $X$ because 
  \begin{equation*}
    \rank \twobytwo{B}{C}{X}{D} = \rank \twobytwo{B}{C}{X - DC^{-1} B}{0} = \ropt = \rank \twobyone{B}{C} \mbox{ if, and only if, } X = DC^{-1}B,
  \end{equation*}
  where we used the fact that the rank of a matrix is preserved by adding some matrix-weighted multiple of
  one block row to another.

  Finally, we establish the affine dependence of $X$. For a matrix $M$ with full column or row rank, denote by $M^\linv$ or $M^\rinv$ any
  distinguished left or right inverse. Under the standing hypotheses, it is straightforward to derive
  (see Appendix~\ref{app:Cinvder})
  that $C^{-1}$ possesses the block structure
  \begin{equation*}
    C^{-1} = \twobytwo{\onebytwo{I}{0} \onebytwo{C_{11}}{C_{12}}^\rinv}{0}{\left(\onebytwo{0}{I} - \twobyone{C_{12}}{C_{22}}^\linv \twobyone{0}{I}
        \onebytwo{C_{21}}{C_{22}}\right) \onebytwo{C_{11}}{C_{12}}^\rinv}{\twobyone{ C_{12}}{C_{22}}^\linv \twobyone{0}{I}},
  \end{equation*}
  where the partitioning is such that the products $CC^{-1}$ and $C^{-1}C$ are conformal.
  With this structure clear, $X$ can be written as the following affine function of 
  $B_2$, $C_{21}$, and $D_1$:
  \begin{equation*}
    X =  D_1 E + F C_{21} G +  H B_2 + K,
  \end{equation*}
  where
  \begin{align*}
      E & = \onebytwo{I}{0} \onebytwo{C_{11}}{C_{12}}^\rinv B_{1},&
      F & = -  D_2 \twobyone{C_{12}}{C_{22}}^\linv \twobyone{0}{I}, \\
      H &= D_2 \twobyone{ C_{12}}{C_{22}}^\linv \twobyone{0}{I},&
      K &= D_2  \onebytwo{0}{I}  \onebytwo{C_{11}}{C_{12}}^\rinv B_{1} + FC_{22}K,
  \end{align*}
  with $\onebytwo{C_{11}}{C_{12}}^\rinv B_{1}$ partitioned as
  $\twobyone{G}{E}$ such that $\onebytwo{C_{11}}{C_{12}}\twobyone{G}{E}$ is conformal.
\end{proof}

\begin{construction}[Solution of block $2\times 2$ minimal rank completion problem \eqref{eq:2x2_lrcp}]\label{cons:2x2_lrcp}
  Consider the block $2\times 2$ minimal rank completion problem \eqref{eq:2x2_lrcp}. Do the following:

  \begin{enumerate}
  \item Choose a minimal set of columns $\set{J}$ of $B$ such that $\Col \onebytwo{\indexmat{B}{:}{\set{J}}}{C} = \Col \onebytwo{B}{C}$. 
    Extend this by a minimal additional set of columns $\set{J}'$, disjoint from $\set{J}$, such that $\Col \indexmat{B}{:}{\set{J}\cup\set{J}'} = \Col B$.
    Denote $\overline{\set{J}}$ to be the remaining columns of $B$ not in $\set{J}\cup\set{J}'$.\label{step:2x2_lrcp_cols}  
  \item Similarly, choose a minimal set of rows $\set{I}$ of $D$ such that $\Row \twobyone{C}{\indexmat{D}{\set{I}}{:}} = \Row \twobyone{C}{D}$.
    Extend this by a minimal additional set of rows $\set{I}'$, disjoint from $\set{I}$, such that $\Row \indexmat{D}{\set{I}\cup \set{I}'}{:} = \Row D$.
    Denote $\overline{\set{I}}$ to be the remaining rows of $D$ not in $\set{I}\cup\set{I}'$. 
    \label{step:2x2_lrcp_rows}
  \item Consider $X$ as a block matrix corresponding to these index sets:
    \begin{equation*}
      X = \kbordermatrix{
        & \overline{\set{J}}              & \set{J}'               & \set{J}              \\
        \set{I}                         & \indexmat{X}{\set{I}}{\overline{\set{J}}}  & \indexmat{X}{\set{I}}{\set{J}'}   &  \indexmat{X}{\set{I}}{\set{J}} \\
        \set{I}'                        & \indexmat{X}{\set{I}'}{\overline{\set{J}}} & \indexmat{X}{\set{I}'}{\set{J}'}  &  \indexmat{X}{\set{I}'}{\set{J}} \\
        \overline{\set{I}} & \indexmat{X}{\overline{\set{I}}}{\overline{\set{J}}} &
        \indexmat{X}{\overline{\set{I}}}{\set{J}'} & \indexmat{X}{\overline{\set{I}}}{\set{J}} \\
      }.
    \end{equation*}
    Choose the strictly block upper triangular part of this array with
    respect to this partitioning---namely $\indexmat{X}{\set{I}}{\set{J}'}$,
    $\indexmat{X}{\set{I}}{\set{J}}$,
    and $\indexmat{X}{\set{I}'}{\set{J}}$---arbitrarily. 

  \item For the rank of the completed matrix to be minimal, the remaining entries are now fixed as follows:

    \begin{enumerate}
    \item Let $\indexmat{X}{\set{I}'}{\set{J}'}$ be the unique rank-minimizing solution to the minimal rank completion problem 
      \begin{equation} \label{eq:2x2_filling}
        \indexmat{X}{\set{I}'}{\set{J}'} := \argmin_Y \rank
        \threebythree{\indexmat{B}{:}{\set{J}'}}{\indexmat{B}{:}{\set{J}}}{C}
        {\indexmat{X}{\set{I}}{\set{J}'}}{\indexmat{X}{\set{I}}{\set{J}}}{\indexmat{D}{\set{I}}{:}}
        {Y}{\indexmat{X}{\set{I}'}{\set{J}}}{\indexmat{D}{\set{I}'}{:}},
      \end{equation} \label{item:filling1}
      as provided by the unique completion lemma.

    \item Observe the columns of $\indexmat{B}{:}{\overline{\set{J}}}$ are spanned by the linearly independent columns of $\indexmat{B}{:}{\set{J}\cup\set{J}'}$
      and the rows of $\indexmat{D}{:}{\overline{\set{I}}}$ are spanned by the linearly independent rows of $\indexmat{D}{:}{\set{I}\cup\set{I}'}$.
      Therefore, there exist (unique) matrices $R$ and $Q$ such that
      \begin{equation*}
        \indexmat{B}{:}{\set{J}\cup\set{J}'}Q = \indexmat{B}{:}{\overline{\set{J}}}, \quad \indexmat{D}{\set{I}\cup\set{I}'}{:} =
        R\indexmat{D}{\overline{\set{I}}}{:}.
      \end{equation*}
      Set
      \begin{gather*}
        \indexmat{X}{\set{I}\cup\set{I}'}{\overline{\set{J}}} :=  \indexmat{X}{\set{I}\cup\set{I}'}{\set{J}\cup\set{J}'}Q, \quad 
        \indexmat{X}{\overline{\set{I}}}{\set{J}\cup\set{J}'} := R\indexmat{X}{\set{I}\cup\set{I}'}{\set{J}\cup\set{J}'},\\
        \indexmat{X}{\overline{\set{I}}}{\overline{\set{J}}} := R \indexmat{X}{\set{I}\cup\set{I}'}{\set{J}\cup\set{J}'}Q.
      \end{gather*} \label{item:filling2}
    \end{enumerate}
  \end{enumerate}
\end{construction}

\begin{theorem}
  Construction~\ref{cons:2x2_lrcp} produces the complete solution set to \eqref{eq:2x2_lrcp}. Moreover, the solution set is an affine space with dimension
  \begin{gather*}
    \left( \rank \twobyone{C}{D} - \rank C \right) \cdot \rank B  + \rank D \cdot \left( \rank \onebytwo{B}{C} - \rank C \right)\\
    - \left( \rank \twobyone{C}{D} - \rank C \right) \cdot \left( \rank \onebytwo{B}{C} - \rank C \right).
  \end{gather*}
\end{theorem}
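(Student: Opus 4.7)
The plan is to prove the theorem in three parts: \emph{soundness} (every $X$ produced by Construction~\ref{cons:2x2_lrcp} attains $\rank \twobytwo{B}{C}{X}{D} = \ropt$), \emph{completeness} (every rank-$\ropt$ completion arises from the construction for some choice of its free parameters), and the \emph{dimension} computation. The object connecting all three parts is the $3\times 3$ sub-block
\begin{equation*}
\Theta = \begin{bmatrix} \indexmat{B}{:}{\set{J}'} & \indexmat{B}{:}{\set{J}} & C \\ \indexmat{X}{\set{I}}{\set{J}'} & \indexmat{X}{\set{I}}{\set{J}} & \indexmat{D}{\set{I}}{:} \\ \indexmat{X}{\set{I}'}{\set{J}'} & \indexmat{X}{\set{I}'}{\set{J}} & \indexmat{D}{\set{I}'}{:} \end{bmatrix}.
\end{equation*}
By the definitions of $R$ and $Q$ in step 4(b), the rows of $\twobytwo{B}{C}{X}{D}$ indexed by $\overline{\set{I}}$ are $R$-combinations of the $\set{I}\cup\set{I}'$ rows, and the columns indexed by $\overline{\set{J}}$ are $Q$-combinations of the $\set{J}\cup\set{J}'$ columns; hence $\rank \twobytwo{B}{C}{X}{D} = \rank \Theta$ for any $X$ produced by the construction, reducing everything to the analysis of $\Theta$.

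For soundness, I first check the hypotheses of the Unique Completion Lemma for $\Theta$, viewed as a $2\times 2$ block problem with unknown bottom-left $\indexmat{X}{\set{I}'}{\set{J}'}$: hypotheses (1), (3), (5) follow from the minimality of $\set{J}$ in step 1 and hypotheses (2), (4), (6) from the minimality of $\set{I}$ in step 2. Step 4(a) therefore produces the unique rank-minimizer prescribed by the lemma, and a rank calculation on $\Theta$ gives $\rank \Theta = \ropt$. For completeness, suppose $X$ attains $\rank \twobytwo{B}{C}{X}{D} = \ropt$; tightness of the lower bound \eqref{eq:2x2_lower_bound} then forces
\begin{equation*}
\Row \onebytwo{X}{D} \subseteq \Row \onebytwo{B}{C}, \qquad \Col \twobyone{B}{X} \subseteq \Col \twobyone{C}{D},
\end{equation*}
in direct analogy with properties~\ref{item:ucl_row} and~\ref{item:ucl_col} of the UCL. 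Because $\indexmat{D}{\set{I}\cup\set{I}'}{:}$ is a basis of $\Row D$ and $\indexmat{B}{:}{\set{J}\cup\set{J}'}$ a basis of $\Col B$, these containments uniquely force $\indexmat{X}{\overline{\set{I}}}{:} = R\indexmat{X}{\set{I}\cup\set{I}'}{:}$ and $\indexmat{X}{:}{\overline{\set{J}}} = \indexmat{X}{:}{\set{J}\cup\set{J}'}Q$, matching step 4(b). The residual block $\indexmat{X}{\set{I}'}{\set{J}'}$ must then coincide with the unique UCL rank-minimizer for $\Theta$, matching step 4(a); thus $X$ is the construction's output for the free choices inherited from $X$ itself.

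For the dimension, property~\ref{item:ucl_affine} of the UCL makes $\indexmat{X}{\set{I}'}{\set{J}'}$ affine in the three free blocks $\indexmat{X}{\set{I}}{\set{J}'}$, $\indexmat{X}{\set{I}}{\set{J}}$, $\indexmat{X}{\set{I}'}{\set{J}}$, and step 4(b) extends this linearly to the remaining entries, so the solution set is an affine space of dimension $|\set{I}|\cdot|\set{J}'| + |\set{I}|\cdot|\set{J}| + |\set{I}'|\cdot|\set{J}|$. Using the identities $|\set{I}| = \rank\twobyone{C}{D} - \rank C$, $|\set{J}| = \rank\onebytwo{B}{C} - \rank C$, $|\set{I}|+|\set{I}'| = \rank D$, and $|\set{J}|+|\set{J}'| = \rank B$ (all immediate from steps 1 and 2), this sum rearranges to the formula stated in the theorem. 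The principal technical obstacle is the rank identity $\rank \Theta = \ropt$ used in soundness: it requires a careful calculation showing that the three ranks appearing in the UCL formula for $\Theta$ (top block row, right block column, and central block) combine to give $\ropt$ independently of the arbitrary choices made in step 3, so that their dependence on those choices cancels.
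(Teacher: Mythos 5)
The completeness half of your argument rests on a false intermediate claim. You assert that tightness of the lower bound \eqref{eq:2x2_lower_bound} forces $\Row\onebytwo{X}{D}\subseteq\Row\onebytwo{B}{C}$ and $\Col\twobyone{B}{X}\subseteq\Col\twobyone{C}{D}$ ``in direct analogy'' with properties~\ref{item:ucl_row} and~\ref{item:ucl_col} of the unique completion lemma. Those properties are consequences of the lemma's hypotheses (1)--(6)---in particular that every column of $B_1$ lies in $\Col\onebytwo{C_{11}}{C_{12}}$ and every row of $D_2$ lies in $\Row\twobyone{C_{12}}{C_{22}}$---and they do not transfer to a general instance of \eqref{eq:2x2_lrcp}. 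For the scalar instance $B=[1]$, $C=[0]$, $D=[0]$ one has $\ropt=1$ and \emph{every} $X$ attains it, yet $\Col\twobyone{B}{X}\subseteq\Col\twobyone{C}{D}=\{0\}$ fails for every $X$; the row containment fails symmetrically for $B=[0]$, $C=[0]$, $D=[1]$. So the step in which these containments ``uniquely force'' the step~4(b) assignments has no support. The paper's completeness argument is of a different kind: it shows that if $\indexmat{X}{\set{I}'}{\set{J}'}$ deviates from the unique minimizer of \eqref{eq:2x2_filling}, then the submatrix obtained by deleting the $\overline{\set{J}}$ columns already has rank strictly greater than $\ropt$. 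You need an argument of this strict-increase type; and note that the necessity of the step~4(b) assignments, which the paper itself only sketches, is precisely where the degenerate configurations your containment claim glosses over live.

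The soundness and dimension halves are essentially on the paper's track, but you leave the central identity $\rank\Theta=\ropt$ as an acknowledged ``obstacle'' rather than proving it. For the record, your route does close: writing the $\Theta$-subproblem in the $2\times2$ form of the lemma, each of the three ranks appearing in \eqref{eq:2x2_lower_bound} for that subproblem is individually independent of the free choices and in fact each equals $\ropt$ (the top block row and the center block each have rank $\rank\onebytwo{B}{C}+|\set{I}|$, and the right block column has rank $\rank\twobyone{C}{D}+|\set{J}|$, all of which simplify to $\ropt$ via $|\set{I}|=\rank\twobyone{C}{D}-\rank C$ and $|\set{J}|=\rank\onebytwo{B}{C}-\rank C$), whence $\rank\Theta=\ropt+\ropt-\ropt=\ropt$. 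The paper avoids this computation: after discarding the $\overline{\set{J}}$ columns it invokes property~\ref{item:ucl_row} of the unique completion lemma to discard the $\set{I}'$ and $\overline{\set{I}}$ rows as well, leaving a two-by-three block array whose rank is $\ropt$ by inspection. Your hypothesis verification for the lemma and your dimension bookkeeping, including the identities $|\set{I}|+|\set{I}'|=\rank D$ and $|\set{J}|+|\set{J}'|=\rank B$, are correct and match the paper's inclusion--exclusion count.
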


The dimension formula is a special case of the more general formula \cite[Eq.~(0.1)]{woerdeman1989minimal} characterizing the size of the solution set for the block triangular minimal rank completion problem.

\begin{proof}
  First, we observe that the hypotheses of the unique completion lemma are satisfied in
  step~\ref{item:filling1}, so the construction is valid. We shall prove that every choice of the free variables in Construction~\ref{cons:2x2_lrcp} leads to a matrix $X$
  which saturates the lower bound \eqref{eq:2x2_lower_bound}, hence proving every output of Construction~\ref{cons:2x2_lrcp} is a solution to \eqref{eq:2x2_lrcp}.

  Let $X$ be an output of Construction~\ref{cons:2x2_lrcp}. Let $\set{K}$ denote the indices of the rows of $C$ and $\set{L}$ denote the
  indices of the columns of $C$. Then, with respect to the partitioning induced by the index sets, the completed matrix can be written
  in block form as 
  \begin{equation*}
    \twobytwo{B}{C}{X}{D} = 
    \kbordermatrix{
      & \overline{\set{J}}              & \set{J}'               & \set{J}              & \set{L} \\
      \set{K} & \indexmat{B}{:}{\overline{\set{J}}} & \indexmat{B}{:}{\set{J}'} & \indexmat{B}{:}{\set{J}} & C \\
      \set{I}                         & \indexmat{X}{\set{I}}{\overline{\set{J}}}  & \indexmat{X}{\set{I}}{\set{J}'}   &  \indexmat{X}{\set{I}}{\set{J}}
      & \indexmat{D}{\set{I}}{:} \\
      \set{I}'                        & \indexmat{X}{\set{I}'}{\overline{\set{J}}} & \indexmat{X}{\set{I}'}{\set{J}'}  &  \indexmat{X}{\set{I}'}{\set{J}}
      & \indexmat{D}{\set{I}'}{:} \\
        \overline{\set{I}} & \indexmat{X}{\overline{\set{I}}}{\overline{\set{J}}} &
        \indexmat{X}{\overline{\set{I}}}{\set{J}'} & \indexmat{X}{\overline{\set{I}}}{\set{J}} &
        \indexmat{D}{\overline{\set{I}}}{:} \\
    }.
  \end{equation*}
  By step~\ref{item:filling2} of the construction, the columns corresponding to indices in
  $\overline{\set{J}}$ are linear combinations of the remaining columns and are thus immaterial
  to the rank. Once one disregards these columns, the rows indexed by $\set{I}'\cup\overline{\set{I}}$ are linear combinations of the previous
  rows and can similarly be ignored by steps~\ref{item:filling1} of the 
  construction and optimality property
  \ref{item:ucl_row} of the unique completion lemma. Thus,
  \begin{equation*}
    \rank \twobytwo{B}{C}{X}{D} = \rank \begin{bmatrix}
      \indexmat{B}{:}{\set{J}'} & \indexmat{B}{:}{\set{J}} & C \\
      \indexmat{X}{\set{I}}{\set{J}'}   &  \indexmat{X}{\set{I}}{\set{J}} & \indexmat{D}{\set{I}}{:}
      \end{bmatrix} = \ropt,
  \end{equation*}
  achieving the lower bound in \eqref{eq:2x2_lower_bound} and proving that $X$ is indeed a minimal rank completion. The solution $X$ depends affinely on the freely chosen portions of itself because of the affine dependence property \ref{item:ucl_affine} provided
  by the unique completion lemma. The dimension of this space is clear by counting the number of entries in the freely chosen
  portions of $X$ by a inclusion-exlusion argument.

  Now we must show that Construction~\ref{cons:2x2_lrcp} produces \emph{all} solutions to \eqref{eq:2x2_lrcp}. To do this, we shall show that
  every matrix completed with an $X$ \emph{not} produced by Construction~\ref{cons:2x2_lrcp} possesses strictly higher rank than $\ropt$.
  Suppose that $\indexmat{X}{\set{I}'}{\set{J}'}$ is not constructed as in step~\ref{item:filling1}. Then,
  since $\indexmat{X}{\set{I}'}{\set{J}'}$ is the unique rank minimizer for \eqref{eq:2x2_filling}, we see that  
  \begin{align*}
    \rank \twobytwo{B}{C}{X}{D} 
    \ge \rank \begin{bmatrix}
      \indexmat{B}{:}{\set{J}'} & \indexmat{B}{:}{\set{J}} & C \\
      \indexmat{X}{\set{I}}{\set{J}'}   &  \indexmat{X}{\set{I}}{\set{J}} & \indexmat{D}{\set{I}}{:} \\
      \indexmat{X}{\set{I}'}{\set{J}'} & \indexmat{X}{\set{I}'}{\set{J}} & \indexmat{D}{\set{I}'}{:}
    \end{bmatrix}
    > \rank \begin{bmatrix}
      \indexmat{B}{:}{\set{J}'} & \indexmat{B}{:}{\set{J}} & C \\
      \indexmat{X}{\set{I}}{\set{J}'}   &  \indexmat{X}{\set{I}}{\set{J}} & \indexmat{D}{\set{I}}{:} \\
    \end{bmatrix} = \ropt,
  \end{align*}
  where in the first equality we add a weighted linear combination of the first two block rows to the third. Thus, for $X$ to be minimal,
  $\indexmat{X}{\set{I}'}{\set{J}'}$ must be set as in step~\ref{item:filling1}. The proof that the assignments in step~\ref{item:filling2}
  are necessary uses a similar idea and is omitted.
  We conclude that Construction~\ref{cons:2x2_lrcp} completely characterizes the set of solution of \eqref{eq:2x2_lrcp}.
\end{proof}

\section{Solution of the General Problem} \label{sec:main_proof}

Recall we are interested in the following simultaneous optimization problem:
\begin{equation}
  \label{eq:overlapping_block_mrcp}
  \mbox{find } X \mbox{ such that the rank of each block in \eqref{eq:hankel_blocks} is minimal}.
\end{equation}
 We shall show that the following construction produces all solutions.

\begin{construction}[Solution of the overlapping block minimal rank completion problem] \label{cons:overlapping_block_mrcp}
  Consider the overlapping block minimal rank completion problem \eqref{eq:overlapping_block_mrcp}. Do the following

  \begin{enumerate}
  \item Recursively construct column index sets
    \begin{equation*}
      \emptyset = \set{L}_n \subseteq \set{L}_{n-1} \subseteq \cdots \subseteq \set{L}_1 \subseteq
      \set{L}_0
    \end{equation*}
    where, for $i \in \{1,2,\ldots,n-1\}$, $\set{L}_i$ is a minimal set of indices containing $\set{L}_{i+1}$ such that
    \begin{equation*}
      \Col \begin{bmatrix}
        A_{i1}(:,\set{L}_i) & A_{i2} & \cdots & A_{ii} \\
        \vdots                  & \vdots     & \ddots & \vdots         \\
        A_{(n-1)1}(:,\set{L}_i) & A_{(n-1)2} & \cdots & A_{(n-1)i}
      \end{bmatrix} =
      \Col \begin{bmatrix}
        A_{i1} & A_{i2} & \cdots & A_{ii} \\
        \vdots     & \vdots     & \ddots & \vdots         \\
        A_{(n-1)1} & A_{(n-1)2} & \cdots & A_{(n-1)i}
      \end{bmatrix}
    \end{equation*}
    and $\set{L}_0$ consists of all the columns of $X$.
  
  \item Recursively construct row index sets sets
    \begin{equation*}
      \emptyset = \set{K}_0 \subseteq \set{K}_1 \subseteq \cdots \subseteq \set{K}_{n-1}
      \subseteq \set{K}_n
    \end{equation*}
    such that, for $i \in \{1,2,\ldots,n-1\}$, $\set{K}_i$ is a minimal set of indices containing $\set{K}_{i-1}$ for which
    \begin{equation*}
      \Row \begin{bmatrix}
        A_{(i+1)2}          & \cdots & A_{(i+1)(i+1)} \\
        \vdots              & \ddots & \vdots \\
        A_{(n-1)2}          & \cdots & A_{(n-1)(i+1)} \\
        A_{n2}(\set{K}_i,:) & \cdots & A_{n(i+1)}(\set{K}_i,:)
      \end{bmatrix}
      =
      \Row \begin{bmatrix}
        A_{(i+1)2}          & \cdots & A_{(i+1)(i+1)} \\
        \vdots              & \ddots & \vdots \\
        A_{(n-1)2}          & \cdots & A_{(n-1)(i+1)} \\
        A_{n2}              & \cdots & A_{n(i+1)}
      \end{bmatrix}
    \end{equation*}
    and $\set{K}_n$ consists of all the rows of $X$. Note the inclusions of these sets 
    occur in the opposite direction compared to the previous step.

  \item Introduce index sets $\set{I}_i := \set{K}_i \setminus \set{K}_{i-1}$ and $\set{J}_i := \set{L}_{i-1} \setminus \set{L}_{i}$
    for each $i \in \{1,2,\ldots,n\}$. Consider $X$ as a block matrix $X = (\indexmat{X}{\set{I}_i}{\set{J}_j})_{i,j=1}^n$ induced by the partitioning
    index sets $\{\set{I}_i\}_{i=1}^n$ and
    $\{\set{J}_j\}_{j=1}^n$.

  \item Choose the strictly upper triangular blocks $\indexmat{X}{\set{I}_i}{\set{J}_j}$ for $i < j$ arbitrarily. \label{step:free}

  \item At this point the remainder of the entries of a minimal $X$ are fixed by the choices made. For each $i =1,2,\ldots,n$ in sequence,
    set $\overline{\set{L}_i} := \set{L}_0 \setminus \set{L}_i = \bigcup_{j=1}^i \set{J}_j$ and assign the $i$th block row $\indexmat{X}{\set{I}_i}{\overline{\set{L}_i}}$
    to be the unique solution of the following minimal rank completion problem 
    \begin{equation*}
      \indexmat{X}{\set{I}_i}{\overline{\set{L}_i}}
      = \argmin_Y
      \rank\resizebox{.6\hsize}{!}{$\left[\arraycolsep=1.1pt\def\arraystretch{1.3}
          \begin{array}{c|c|ccc}
            A_{i1}(:,\overline{\set{L}_i}) & A_{i1}(:,\set{L}_{i}) & A_{i2} & \cdots & A_{ii} \\ 
            A_{(i+1)1}(:,\overline{\set{L}_i}) & A_{(i+1) 2}(:,\set{L}_{i}) & A_{(i+1)2} & \cdots & A_{(i+1)i} \\
            \vdots & \vdots & \vdots & \ddots    & \vdots \\
            A_{(n-1)1}(:,\overline{\set{L}_i}) & A_{(n-1)1}(:,\set{L}_{i}) & A_{(n-1)2} & \cdots & A_{(n-1)i} \\ \hline
            \indexmat{X}{\set{K}_{i-1}}{\overline{\set{L}_{i}}} &
                                                                    \indexmat{X}{\set{K}_{i-1}}{\set{L}_i} &                                                                                                                     A_{n2}(\set{K}_{i-1},:) & \cdots & A_{ni}(\set{K}_{i-1},:) \\
            \hline
            Y &
                \indexmat{X}{\set{I}_{i}}{\set{L}_i} &                                                                                                                       A_{n2}(\set{I}_{i},:) & \cdots & A_{ni}(\set{I}_{i},:) \\            
          \end{array} \right]$},
    \end{equation*} 
    as furnished by the unique completion lemma.\label{step:omrcp_lower}
  \end{enumerate}
  Some key features of this construction are illustrated graphically in Figure~\ref{fig:construction}.
\end{construction}

\begin{figure}[t]
  \centering
    \begin{subfigure}[b]{0.4\textwidth}
    \centering
    \includegraphics[width=\textwidth]{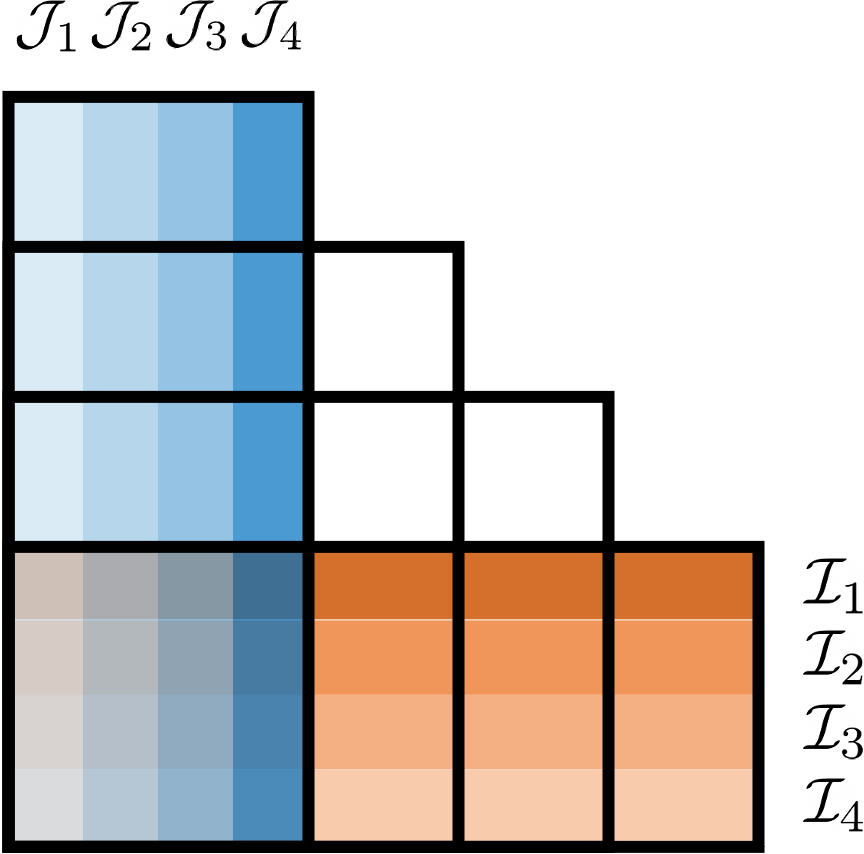}
    \caption{}\label{fig:4x4_example}
  \end{subfigure}
  \quad
  \begin{subfigure}[b]{0.4\textwidth}
    \centering
    \includegraphics[width=\textwidth]{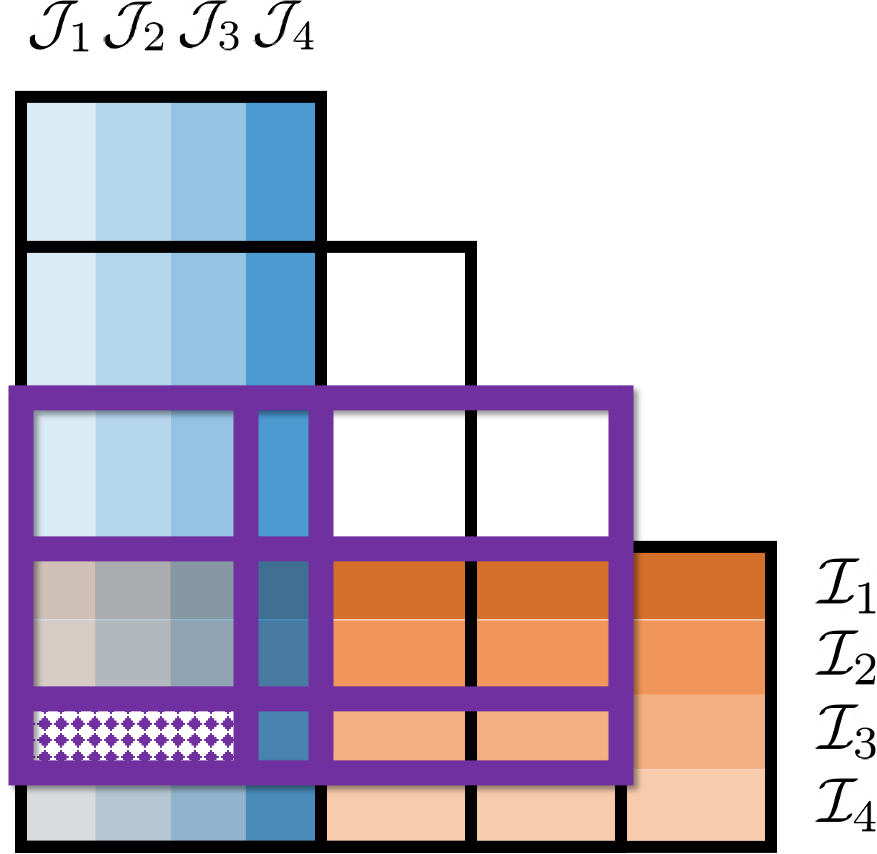}
    \caption{}\label{fig:4x4_lower}
  \end{subfigure}  
  \caption{Illustration of the construction of the overlapping block simultaneous rank-minimizing $X$ in the case $n = 4$: (\subref{fig:4x4_example}),
    partition of rows and columns of $X$ into index sets and (\subref{fig:4x4_lower}), determination of the lower triangular portion of
    block row $i = 3$ entry by solving a
    block $2\times 2$ minimal rank completion problem (shown in purple) partitioned as $3\times 3$ to satisfy the requirements of the the unique completion lemma (Lemma~\ref{lem:ucl}).}
  \label{fig:construction}
\end{figure}

We note that there are different orders for filling in the remaining portions of the
matrix as done in step~\ref{step:omrcp_lower}, but these all produce the same result
given the same choices of the free variables in step~\ref{step:free}. We prefer the order we
present in step~\ref{step:omrcp_lower} because it is easy to analyze.
Let us now present a more refined version of Theorem~\ref{thm:main_theorem}, which we shall then prove.

\begin{theorem} 
  The overlapping block minimal rank completion problem \eqref{eq:overlapping_block_mrcp} always
  possesses a solution and all such solutions are produced by
  Construction~\ref{cons:overlapping_block_mrcp}. In particular, the solution space of \eqref{eq:overlapping_block_mrcp} is an affine space of dimension
  \begin{equation*}
    \sum_{1\le i < j \le n} (\alpha_i - \alpha_{i-1})(\beta_{j-1} - \beta_j),
  \end{equation*}
  where
  \begin{align*}
    \alpha_i &= \rank \begin{bmatrix} A_{(i+1)2} & \cdots & A_{(i+1)(i+1)} \\
      \vdots & \ddots & \vdots \\
      A_{(n-1)2} & \cdots & A_{(n-1)(i+1)} \\
      A_{n2} & \cdots & A_{n(i+1)} \end{bmatrix} -
    \rank \begin{bmatrix} A_{(i+1)2} & \cdots & A_{(i+1)(i+1)} \\
      \vdots & \ddots & \vdots \\
      A_{(n-1)2} & \cdots & A_{(n-1)(i+1)}
    \end{bmatrix},& i&\in \{1,\ldots,n-1\};\\
    \beta_j &= \rank \begin{bmatrix} A_{j1} & A_{j2} & \cdots & A_{jj} \\
      \vdots & \vdots & \ddots & \vdots \\
      A_{(n-1)1} & A_{(n-1)2} & \cdots & A_{(n-1)j}
    \end{bmatrix} -
                                         \rank \begin{bmatrix} A_{j2} & \cdots & A_{jj} \\
                                           \vdots & \ddots & \vdots \\
                                           A_{(n-1)2} & \cdots & A_{(n-1)j}
                                         \end{bmatrix},&
                                                         j&\in \{2,\ldots,n\}.
  \end{align*}
\end{theorem}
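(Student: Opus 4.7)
The plan is to reduce the general problem to repeated applications of the block $2\times 2$ case. Viewing the $i$-th Hankel block $H_i$ in \eqref{eq:hankel_blocks} as a block $2\times 2$ matrix---with the first block-column (minus the bottom $X$ entry) as $B_i$, block columns $2,\ldots,i$ from block rows $i,\ldots,n-1$ as $C_i$, $X$ as the bottom-left corner, and $\onebythree{A_{n2}}{\cdots}{A_{ni}}$ as $D_i$---the $2\times 2$ lower bound \eqref{eq:2x2_lower_bound} yields
\begin{equation*}
\rank H_i \ge \rank \onebytwo{B_i}{C_i} + \rank \twobyone{C_i}{D_i} - \rank C_i =: r^{(i)}_{\rm opt}.
\end{equation*}
The theorem then reduces to showing that Construction~\ref{cons:overlapping_block_mrcp} produces an $X$ saturating all $n$ of these bounds simultaneously and that every such $X$ arises this way; the affine structure and dimension formula will follow.

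First I would verify that the construction is well-defined by checking the hypotheses of the Unique Completion Lemma (Lemma~\ref{lem:ucl}) at every iteration of step~\ref{step:omrcp_lower}. The $3\times 3$ partition introduced there is engineered so that the top-row column-span condition---hypothesis~(1) of the lemma---becomes exactly the defining property of $\set{L}_i$, while the right-column row-span condition---hypothesis~(2)---becomes exactly the defining property of $\set{K}_i$. The minimality clauses in the definitions of $\set{L}_i$ and $\set{K}_i$, combined with the standard reduction to maximal linearly independent subcollections used inside the proof of Lemma~\ref{lem:ucl}, deliver hypotheses~(3)--(6). Hence the $i$-th subproblem is uniquely solvable, yielding a well-defined $X$ whose dependence on the free upper-triangular blocks is affine by property~\ref{item:ucl_affine}.

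Next I would prove $\rank H_i = r^{(i)}_{\rm opt}$ for every $i$. The Unique Completion Lemma guarantees the submatrix of $H_i$ solved in the $i$-th subproblem already has rank $r^{(i)}_{\rm opt}$, so only the extra rows and columns of $H_i$---the columns of $B_i$ indexed by $\overline{\set{L}_i}$ and the rows of the $n$-th block row indexed by $\set{I}_{i+1}\cup\cdots\cup\set{I}_n$---need to be shown not to raise the rank. The defining column-span property of $\set{L}_i$, combined with the assignment of $\indexmat{X}{\set{K}_{i-1}}{\overline{\set{L}_i}}$ in the \emph{previous} iterations of step~\ref{step:omrcp_lower}, makes the extra columns linear combinations of the retained ones; dually, the defining row-span property of $\set{K}_i$ together with the \emph{later} iterations (which assign $\indexmat{X}{\set{I}_j}{\overline{\set{L}_i}}$ for $j>i$ consistently) handles the extra rows. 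This is the direct analogue of the row-and-column collapsing argument that closes the proof of the block $2\times 2$ case. Completeness then follows by reversing the argument: if $X$ achieves $r^{(i)}_{\rm opt}$ for every $i$, properties~\ref{item:ucl_row}--\ref{item:ucl_col} applied to each Hankel block force $\indexmat{X}{\set{I}_i}{\overline{\set{L}_i}}$ to equal the unique UCL solution of the $i$-th subproblem once the blocks $\indexmat{X}{\set{I}_i}{\set{J}_j}$ for $j>i$ are fixed. The dimension count is then inclusion-exclusion: $|\set{I}_i|=\alpha_i-\alpha_{i-1}$ and $|\set{J}_j|=\beta_{j-1}-\beta_j$ are immediate from the minimality clauses defining $\set{K}_i$ and $\set{L}_j$, and the free entries of $X$ occupy exactly the pairs $i<j$.

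The main obstacle is the conceptual point highlighted in the introduction: that a single $X$ simultaneously minimizes all $n$ Hankel block ranks rather than forcing trade-offs between them. The nested index sets $\{\set{L}_i\}$ and $\{\set{K}_i\}$ are the technical device that rules out such trade-offs; they guarantee that the submatrix solved in the $i$-th UCL subproblem retains every row and column which could influence $\rank H_i$, so each local optimization is in fact global. The most delicate bookkeeping is showing that the sequential order of step~\ref{step:omrcp_lower} is consistent---that the assignments made in the $j$-th subproblem for $j<i$ are exactly what is needed to collapse the omitted portion of $H_i$---and that different valid filling orders produce the same $X$ given the same free choices.
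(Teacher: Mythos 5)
Your proposal is correct and follows essentially the same route as the paper's proof: verify the Unique Completion Lemma hypotheses at each iteration (with the disjointness conditions (3)--(6) coming from the minimality of the nested index sets, which the paper makes precise via a short induction on $i$), then show each Hankel block's rank collapses to the $2\times 2$ lower bound by discarding the columns indexed by $\overline{\set{L}_i}$ and the rows indexed by $\overline{\set{K}_i}$ using the UCL optimality properties chained across iterations (the paper organizes this as a backward induction from $i=n$ maintaining a row-containment invariant), with completeness from uniqueness in the UCL and the dimension from counting the free blocks. No substantive gap.
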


\begin{proof} 
  Let us first check that the hypotheses of the unique completion lemma are indeed satisfied 
  when it is invoked in step~\ref{step:omrcp_lower} of Construction~\ref{cons:overlapping_block_mrcp}.
  For convenience, we denote the block entries of the matrix completion problem considered at iteration
  $i \in \{1,2,\ldots,n\}$ using notation consistent with the unique completion lemma:
  \begin{equation*}
    \threebythree{B_1}{C_{11}}{C_{12}}{B_2}{C_{21}}{C_{22}}{Y}{D_1}{D_2} =
    \left[\arraycolsep=1.1pt\def\arraystretch{1.3}
      \begin{array}{c|c|ccc}
        A_{i1}(:,\overline{\set{L}_i}) & A_{i1}(:,\set{L}_i) & A_{i2} & \cdots & A_{ii} \\ 
        A_{(i+1)1}(:,\overline{\set{L}_i}) & A_{(i+1) 1}(:\set{L}_i) & A_{(i+1)2} & \cdots & A_{(i+1)i} \\
        \vdots & \vdots & \vdots & \ddots    & \vdots \\
        A_{(n-1)1}(:,\overline{\set{L}_i}) & A_{(n-1)1}(:,\set{L}_i) & A_{(n-1)2} & \cdots & A_{(n-1)i} \\ \hline
        \indexmat{X}{\set{K}_{i-1}}{\overline{\set{L}_i}} & \indexmat{X}{\set{K}_{i-1}}{\set{L}_i} & A_{n2}(\set{K}_{i-1},:) & \cdots & A_{ni}(\set{K}_{i-1},:) \\ \hline
        Y & \indexmat{X}{\set{I}_i}{\set{L}_i} & A_{n2}(\set{I}_i,:) & \cdots & A_{ni}(\set{I}_i,:)
      \end{array}
    \right].
  \end{equation*}
  By construction of $\set{L}_i$ and $\set{K}_{i-1}$ (steps \ref{step:2x2_lrcp_cols} and
  \ref{step:2x2_lrcp_rows}), we have that $\Col \onebythree{B_1}{C_{11}}{C_{12}} = \Col \onebytwo{C_{11}}{C_{12}}$ and
  $\Row \threebyone{C_{12}}{C_{22}}{D_2} = \Row \twobyone{C_{12}}{C_{22}}$. It is an easy induction argument that the columns of $C_{11}$ and the
  rows of $C_{22}$ are linearly independent. 
  
  We now check that $\Row C_{12} \cap \Row C_{22} = \{0\}$. We prove by induction on $i$. The
  base cases $i = 1$ and $i=2$ are clear. Now suppose the claim holds for $i-1$ for some $i > 2$.
  From this,  it is clear that $\Row C_{12} \cap \Row C_{22}(\set{K}_{i-2},:) = \{0\}$.
  Since $\set{K}_{i-1}$ is a minimal extension
  of $\set{K}_{i-2}$, it follows that $\Row C_{12} \cap \Row C_{22} = \{0\}$. For if it
  were the case that $\Row C_{12} \cap \Row C_{22} \supsetneq \{0\}$, then $\set{K}_{i-1}$ could be made smaller by at least one index, contradicting minimality of $\set{K}_{i-1}$.
  An entirely analogous argument shows that $\Col C_{11} \cap \Col C_{12} = \{0\}$.
  Thus, the hypotheses of the unique completion lemma are satisfied in this case and there exists a unique rank-minimizing $\indexmat{X}{\set{I}_i}{\overline{\set{L}_i}}$
  depending in an affine way on the freely chosen portion of $X$.

  Now, let us establish inductively that this $X$ does indeed minimize the rank of all the blocks,
  starting with step $i=n$ as our base case. Throughout, denote $\overline{\set{K}_i} :=
  \set{K}_n \setminus \set{K}_i$. By the solution properties \ref{item:ucl_row}-\ref{item:ucl_col}
  provided by the unique completion lemma for step $i = n$ of the construction, we have that
  $\Col X \subseteq \Col \begin{bmatrix} A_{n2} & \cdots & A_{nn} \end{bmatrix}$ and  
  \begin{gather*}
      \Row \begin{bmatrix} \indexmat{X}{\overline{\set{K}_{n-1}}}{:}
    & A_{n2}(\overline{\set{K}_{n-1}},:) & \cdots & A_{nn}(\overline{\set{K}_{n-1}},:) \end{bmatrix} \\
    \subseteq \Row \begin{bmatrix}
    \indexmat{X}{\set{K}_{n-1}}{:}
    & A_{n2}(\set{K}_{n-1},:) & \cdots & A_{nn}(\set{K}_{n-1},:) \end{bmatrix}.
  \end{gather*}
  Thus, by the first of these properties,
  the rank of the $n$th block is given by
  \begin{equation*}
    \rank \begin{bmatrix} X & A_{n2} & \cdots & A_{nn} \end{bmatrix} = \rank \begin{bmatrix} A_{n2} & \cdots & A_{nn} \end{bmatrix},
  \end{equation*}
  which is optimal.

  Next, seeking to show that $X$ minimizes the rank of the $i$th block, inductively assume that $X$ minimizes the rank
  of blocks $i+1,\ldots,n$ and assume that 
  \begin{equation} \label{eq:inductive_row}
    \Row \begin{bmatrix} \indexmat{X}{\overline{\set{K}_i}}{:}
      & A_{n2}(\overline{\set{K}_i},:) & \cdots & A_{n(i+1)}(\overline{\set{K}_i},:) \end{bmatrix} \subseteq \Row \begin{bmatrix}
      \indexmat{X}{\set{K}_i}{:}
      & A_{n2}(\set{K}_i,:) & \cdots & A_{n(i+1)}(\set{K}_i,:) \end{bmatrix}.
  \end{equation}
  By the optimality properties \ref{item:ucl_row}-\ref{item:ucl_col} of the unique completion lemma, we have 
  \begin{align}
    \Row \begin{bmatrix} \indexmat{X}{\set{I}_i}{:}
      & A_{n2}(\set{I}_i,:) & \cdots & A_{ni}(\set{I}_i,:) \end{bmatrix} &\subseteq \Row \begin{bmatrix}
      \indexmat{X}{\set{K}_{i-1}}{:}
      & A_{n2}(\set{K}_{i-1},:) & \cdots & A_{ni}(\set{K}_{i-1},:) \end{bmatrix}, \label{eq:row_property} \\
    \Col \begin{bmatrix} A_{i1}(:,\overline{\set{L}_i}) \\ \vdots \\ A_{(n-1)1}(:,\overline{\set{L}_i}) \\ \indexmat{X}{\set{K}_i}{\overline{\set{L}_i}}
    \end{bmatrix}&\subseteq \Col \begin{bmatrix} A_{i1}(:,\set{L}_i) & A_{i2} & \cdots & A_{ii} \\ \vdots & \vdots & \ddots & \vdots \\
      A_{(n-1)1}(:,\set{L}_i) & A_{(n-1)2} & \cdots & A_{(n-1)i} \\
      \indexmat{X}{\set{K}_i}{\set{L}_i} & A_{n2}(\set{K}_i,:) & \cdots & A_{ni}(\set{K}_i,:) \end{bmatrix}. \label{eq:col_property} 
  \end{align}
  Combining \eqref{eq:inductive_row} and \eqref{eq:row_property} establishes the inductive hypothesis \eqref{eq:inductive_row} for block $i$.
  Together \eqref{eq:inductive_row}, \eqref{eq:row_property}, and \eqref{eq:col_property} show that
  \begin{equation*}
    \rank \begin{bmatrix} A_{i1} & A_{i2} & \cdots & A_{ii} \\
      \vdots & \vdots & \ddots & \vdots \\
      A_{(n-1)1} & A_{(n-1)2} & \cdots & A_{(n-1)i} \\
      X & A_{n2} & \cdots & A_{ni} \end{bmatrix} =
    \rank \begin{bmatrix} A_{i1}(:,\set{L}_i) & A_{i2} & \cdots & A_{ii} \\
      \vdots & \vdots & \ddots & \vdots \\
      A_{(n-1)1}(:,\set{L}_i) & A_{(n-1)2} & \cdots & A_{(n-1)i} \\
      \indexmat{X}{\set{K}_{i-1}}{\set{L}_i} & A_{n2}(\set{K}_{i-1},:) & \cdots & A_{ni}(\set{K}_{i-1},:) \end{bmatrix},
  \end{equation*}
  which saturates the optimal rank bound \eqref{eq:2x2_lower_bound} by the minimality of $\set{K}_{i-1}$ and $\set{L}_i$.
  The fact that no simultaneously minimizing $X$ exists other than as produced by this construction follows from the uniqueness
  of the solution furnished by the unique completion lemma at each step of the construction. The affine dimension of the solution space
  is immediate from counting the freely chosen subblocks of $X$.
\end{proof}    

\section{Discussion and Conclusions} \label{sec:discussion_and_conclusions}

Let us first remark on issues of numerical implementation for solving this problem using finite-precision arithmetic where the base
field is either the real or complex numbers. The construction we used to prove Theorem~\ref{thm:main_theorem} involves selecting
minimal sets of rows or columns of a matrix which span a certain row or column space. This is often undesirable in numerical computations
as the row and column sets can be ill-conditioned. Fortunately, this is not an essential feature of this construction. These row and column
subsets can be replaced by orthonormalized bases for these same row and column spaces and the free variables in the solution correspond to the
strictly block upper triangular portion of $X$ after an appropriate change of basis. When phrased in this way, the algorithm requires
the computation of orthonormal bases for intersection of subspaces as its primitive operation, which can be computed
using SVD-based algorithms \cite{bjorck1973numerical}. We note that this problem is inherently numerically unstable
and the solution set of the problem can be prone to dramatic changes based on small thresholding decisions about the numerical rank of a matrix. 

Let us also remark on an interesting consequence of our analysis which may have
computational importance.
\begin{corollary}
If any subcollection of the blocks \eqref{eq:hankel_blocks} possesses a \emph{unique}
rank-minimizing $X$, then $X$ is guaranteed to minimize the rank of all the other blocks.
\end{corollary}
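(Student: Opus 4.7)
The plan is to deduce this corollary cleanly from Theorem~\ref{thm:main_theorem} by exploiting set containment of solution sets, with no need to revisit the construction. The key observation is that if $\mathcal{B}$ denotes the full collection \eqref{eq:hankel_blocks} and $\mathcal{B}' \subseteq \mathcal{B}$ is any subcollection, then any $X$ that simultaneously minimizes the rank of every block in $\mathcal{B}$ certainly minimizes the rank of every block in $\mathcal{B}'$. Writing $S_{\mathcal{B}}$ and $S_{\mathcal{B}'}$ for the respective sets of simultaneous rank-minimizers, this gives the inclusion $S_{\mathcal{B}} \subseteq S_{\mathcal{B}'}$.

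With this in hand, the proof is essentially a one-liner. By Theorem~\ref{thm:main_theorem}, $S_{\mathcal{B}}$ is non-empty. If the hypothesis of the corollary gives that $S_{\mathcal{B}'} = \{X^*\}$ is a singleton, then the chain $\emptyset \ne S_{\mathcal{B}} \subseteq S_{\mathcal{B}'} = \{X^*\}$ forces $S_{\mathcal{B}} = \{X^*\}$ as well. Hence the unique rank-minimizer for the subcollection coincides with a simultaneous rank-minimizer for the entire collection, so $X^*$ minimizes the rank of every block in \eqref{eq:hankel_blocks}.

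No step here is really an obstacle; the only subtlety is making sure the phrase \emph{unique rank-minimizing $X$ for a subcollection} is interpreted as \emph{unique simultaneous rank-minimizer over that subcollection}, which is the natural reading and the one needed for the inclusion argument. I would include a short sentence noting this interpretation, and perhaps remark in passing that the same argument together with the affine structure of $S_{\mathcal{B}}$ established above shows, more generally, that the dimension of $S_{\mathcal{B}}$ is bounded above by that of $S_{\mathcal{B}'}$ for any subcollection $\mathcal{B}'$.
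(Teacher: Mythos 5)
Your proof is correct and is essentially the paper's own argument: the paper also deduces the corollary immediately from Theorem~\ref{thm:main_theorem}, phrasing the same containment $\emptyset \ne S_{\mathcal{B}} \subseteq S_{\mathcal{B}'}$ as a contradiction (``if this unique $X$ failed to minimize the rank of all the other blocks, then there would be no joint minimizer at all''). Your direct set-inclusion phrasing is fine; just note that the closing remark about the dimension of $S_{\mathcal{B}'}$ presupposes that the solution set for an arbitrary subcollection is itself an affine space, which the paper establishes only for the full collection.
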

The proof is immediate from Theorem~\ref{thm:main_theorem}: if this unique $X$ failed to
minimize the rank of all the other blocks, then there would be no joint minimizer at all,
contradicting the theorem. This observation has the following consequence.
In the class of examples where we expect the overlapping block minimal rank completion
problem to have a unique solution, an effective strategy might be to pick a block
at random and check whether it has a unique rank-minimizing solution or if the dimension of the solution space is a small number. In this case, it is
possible we will find the rank-minimizing choice or a nearly rank-minimizing choice much more efficiently than if we performed the entire construction. We found empirically
that this was an effective strategy for examples we considered in \cite{CEG19}.

There are several extensions of this work that may be possible. One natural question is to see if this analysis can be generalized to more complicated overlapping minimal
rank completion problems, some of which emerge in finding rank-structural representations for more complicated graph structures in our motivating application \cite{CEG19}.
 We consider it an interesting open problem of how large a class of general overlapping block minimal rank
completion problems possess simultaneous minimizers which can
be computed tractably, in view of hardness results such as \cite{buss_computational_1999}. 

\section*{Acknowledgements}

We thank the anonymous reviewer for suggesting a significantly shorter and more revealing proof of Theorem~\ref{thm:main_theorem} than the one we originally discovered,
from which the proof presented in this article has been adapted. 

\bibliographystyle{plain}
\bibliography{references}

\begin{thebibliography}{10}

\bibitem{bjorck1973numerical}
\r{A}ke Bj\"orck and Gene~H. Golub.
\newblock Numerical methods for computing angles between linear subspaces.
\newblock {\em Mathematics of computation}, 27(123):579--594, 1973.

\bibitem{buss_computational_1999}
Jonathan~F. Buss, Gudmund~S. Frandsen, and Jeffrey~O. Shallit.
\newblock The {Computational} {Complexity} of {Some} {Problems} of {Linear}
  {Algebra}.
\newblock {\em Journal of Computer and System Sciences}, 58(3):572--596, June
  1999.

\bibitem{candes_matrix_2009}
Emmanuel~J. Candes and Yaniv Plan.
\newblock Matrix {Completion} {With} {Noise}.
\newblock {\em arXiv:0903.3131 [cs, math]}, March 2009.
\newblock arXiv: 0903.3131.

\bibitem{chandrasekaran2005some}
Shivkumar Chandrasekaran, Patrick Dewilde, Ming Gu, T~Pals, Xiaorui Sun,
  Alle-Jan van~der Veen, and Daniel White.
\newblock Some fast algorithms for sequentially semiseparable representations.
\newblock {\em SIAM Journal on Matrix Analysis and Applications},
  27(2):341--364, 2005.

\bibitem{CEG19}
Shivkumar Chandrasekaran, Ethan~N. Epperly, and Nithin Govindarajan.
\newblock Graph-{{Induced Rank Structures}} and their {{Representations}}.
\newblock {\em arXiv:1911.05858 [cs, math]}, December 2019.

\bibitem{eidelman2014separable}
Yuli Eidelman, Israel Gohberg, and Iulian Haimovici.
\newblock {\em Separable type representations of matrices and fast algorithms}.
\newblock Springer, 2014.

\bibitem{goldberg_using_1992}
David Goldberg, David Nichols, Brian~M. Oki, and Douglas Terry.
\newblock Using collaborative filtering to weave an information tapestry.
\newblock {\em Communications of the ACM}, 35(12):61--70, December 1992.

\bibitem{harvey_complexity_2006}
Nicholas J.~A. Harvey, David~R. Karger, and Sergey Yekhanin.
\newblock The complexity of matrix completion.
\newblock In {\em Proceedings of the seventeenth annual {ACM}-{SIAM} symposium
  on {Discrete} algorithm}, {SODA} '06, pages 1103--1111, USA, January 2006.
  Society for Industrial and Applied Mathematics.

\bibitem{kaashoek_unique_1988}
Marinus~A. Kaashoek and Hugo~J. Woerdeman.
\newblock Unique minimal rank extensions of triangular operators.
\newblock {\em Journal of mathematical analysis and applications},
  131(2):501--516, 1988.
\newblock Publisher: Academic Press.

\bibitem{liu_interior-point_2009}
Zhang Liu and Lieven Vandenberghe.
\newblock Interior-{Point} {Method} for {Nuclear} {Norm} {Approximation} with
  {Application} to {System} {Identification}.
\newblock {\em SIAM Journal on Matrix Analysis and Applications},
  31(3):1235--1256, November 2009.

\bibitem{rennie_fast_2005}
Jasson D.~M. Rennie and Nathan Srebro.
\newblock Fast maximum margin matrix factorization for collaborative
  prediction.
\newblock In {\em Proceedings of the 22nd international conference on {Machine}
  learning}, {ICML} '05, pages 713--719, New York, NY, USA, August 2005.
  Association for Computing Machinery.

\bibitem{schmidt_multiple_1986}
R.~Schmidt.
\newblock Multiple emitter location and signal parameter estimation.
\newblock {\em IEEE Transactions on Antennas and Propagation}, 34(3):276--280,
  March 1986.
\newblock Conference Name: IEEE Transactions on Antennas and Propagation.

\bibitem{woerdeman1987lower}
H.~J. Woerdeman.
\newblock The lower order of lower triangular operators and minimal rank
  extensions.
\newblock {\em Integral Equations and Operator Theory}, 10(6):859--879,
  November 1987.

\bibitem{woerdeman1989minimal}
Hugo~J. Woerdeman.
\newblock Minimal rank completions for block matrices.
\newblock {\em Linear Algebra and its Applications}, 121:105--122, 1989.

\bibitem{woerdeman_minimal_1993}
Hugo~J. Woerdeman.
\newblock Minimal rank completions of partial banded matrices.
\newblock {\em Linear and Multilinear Algebra}, 36(1):59--68, October 1993.

\end{thebibliography}

\appendix

\section{The Form of \texorpdfstring{$C^{-1}$}{C inverse}} \label{app:Cinvder}

Here, we derive the block structure of the inverse of the matrix $C$ as needed in the proof of Lemma~\ref{lem:ucl}. Let
\begin{equation*}
    C^{-1} = \twobytwo{Y_{11}}{Y_{12}}{Y_{21}}{Y_{22}}
\end{equation*}
be a partition of $C$ such such that the products $CC^{-1}$ and $C^{-1}C$ are conformal.
Then since $C^{-1}C = I$, we get the following equations:
\begin{subequations} \label{eq:C}
\begin{align}
    Y_{11} C_{11} + Y_{12} C_{21} &= I, \label{eq:C1} \\
    Y_{11} C_{12} + Y_{12} C_{22} &= 0, \label{eq:C2}\\
    Y_{21} C_{11} + Y_{22} C_{21} &= 0, \label{eq:C3}\\
    Y_{21} C_{12} + Y_{22} C_{22} &= I. \label{eq:C4}
\end{align}
\end{subequations}
By \eqref{eq:C2} and the hypothesis that $\Row C_{12} \cap \Row C_{22} = \{0\}$, we conclude
that $Y_{12} = 0$. Thus, plugging into \eqref{eq:C1} and \eqref{eq:C2}, we discover
\begin{equation*}
    Y_{11} \onebytwo{C_{11}}{C_{12}} = \onebytwo{I}{0} \implies Y_{11} = \onebytwo{I}{0}\onebytwo{C_{11}}{C_{12}}^\rinv.
\end{equation*}
But since $CC^{-1} = I$, we have
\begin{equation*}
    \twobyone{C_{21}}{C_{22}}Y_{22} = \twobyone{0}{I} \implies Y_{22} = \twobyone{C_{21}}{C_{22}}^\linv \twobyone{0}{I}.
\end{equation*}
Then by \eqref{eq:C3} and \eqref{eq:C4},
\begin{equation*}
    Y_{21} \onebytwo{C_{11}}{C_{12}} + Y_{22} \onebytwo{C_{21}}{C_{22}} = \onebytwo{0}{I}
    \implies Y_{21} = \left(\onebytwo{0}{I} - Y_{22} \onebytwo{C_{21}}{C_{22}}\right)\onebytwo{C_{11}}{C_{12}}^\rinv.
\end{equation*}
This completes the claimed descriptions of the block structure of $C^{-1}$.

\end{document}